\g@addto@macro\bfseries{\boldmath}
\newtheorem{theorem}{Theorem}
\newtheorem{lemma}{Lemma}[section]
\newtheorem{proposition}[lemma]{Proposition}
\theoremstyle{definition}
\newtheorem{remark}[lemma]{Remark}
\newtheorem{definition}[lemma]{Definition}
\newtheorem*{korollar*}{Corollary}
\newcommand{\G}{\ensuremath{\Gamma}}
\newcommand{\g}{\ensuremath{\gamma}}
\newcommand{\C}{\ensuremath{\mathbb{C}}}
\newcommand{\bbS}{\ensuremath{\mathbb{S}}}
\newcommand{\M}{\ensuremath{\mathbb{M}}}
\newcommand{\N}{\ensuremath{\mathbb{N}}}
\newcommand{\ov}{\ensuremath{\overline}}
\newcommand{\mf}{\ensuremath{\mathfrak}}
\newcommand{\mc}{\ensuremath{\mathcal}}
\newcommand{\Z}{\ensuremath{\mathbb{Z}}}
\newcommand{\R}{\ensuremath{\mathbb{R}}}
\newcommand{\inv}{\ensuremath{^{-1}}}
\newcommand{\norm}[1]{\left\|#1\right\|}
\renewcommand{\d}[1][t]{\ensuremath{\left.\frac{d}{d#1}\right|_{#1=0}}}
\renewcommand{\Re}{\ensuremath{\operatorname{Re}}}
\DeclareMathOperator{\interior}{int}
\DeclareMathOperator{\ext}{ext}
\DeclareMathOperator{\sign}{sign}
\DeclareMathOperator{\dom}{dom}
\title[Spectral Asymptotics for Kinetic Brownian Motion]{Spectral Asymptotics for Kinetic Brownian Motion on  Surfaces of Constant Curvature}
\author{Martin Kolb, Tobias Weich, and Lasse L. Wolf}
\email{kolb@math.uni-paderborn.de, weich@math.uni-paderborn.de, llwolf@math.uni-paderborn.de}
\begin{document}
\begin{abstract}
 The kinetic Brownian motion on the sphere bundle of a Riemannian manifold~$\M$ is a stochastic process that models a random perturbation  of the geodesic flow. 
 If $\M$ is a orientable compact constantly curved surface, we show that in the limit of infinitely large perturbation the $L^2$-spectrum of the infinitesimal generator of a time rescaled version of the process converges to the Laplace spectrum of the base manifold.
\end{abstract}

\maketitle

\section{Introduction}
Kinetic Brownian motion is a stochastic process that describes a stochastic perturbation of the geodesic flow and has the property that the perturbation affects only the direction of the velocity but preserves its absolute value. It has been studied in the past years by several authors in pure mathematics \cite{FLJ07, angst, Li16,alexis, BT18} but versions of this diffusion process have been developed independently as surrogate models for certain textile production processes  (see e.g. \cite{GKMW07,GS13, KSW13}). 

Kinetic Brownian motion $(Y_t^\g)_{t\geq 0}$ in the setting of a compact Riemannian manifold $(\M, g)$ can be informally described in the following way: $(Y_t^\g)_{t\geq 0}$ is a stochastic process with continuous paths described by a stochastic perturbation of the geodesic flow on the sphere bundle $S\M =\{\xi\in T\M, \|\xi\|_g=1\}.$
More precisely, if we denote the geodesic flow vector field by $X$ and the (non-negative) Laplace operator on the fibers of $S\M$ by $\Delta_\bbS$, then the kinetic Brownian motion is generated by the differential operator 
\[
 -X +\frac 12 \g \Delta_\bbS\colon L^2(S\M)\to L^2(S\M).
\]
The connection to the stochastic process $(Y_t^\g)_{t\geq 0}$ is given via
\[
e^{-t( -X +\frac 12 \g \Delta_\bbS)}f(x) = \mathbb E_x[f(Y_t^\g)] \quad\text{with}\quad f\in L^2(S\M), x\in S\M.
\]
Observe that the parameter $\g>0$ controls the strength of the stochastic perturbation and it is a natural question to study the behavior of $ -X +\frac 12 \g \Delta_\bbS$ and $Y_t^\g$ in the regimes $\g\to 0$ as well as $\g\to\infty$.  
 Drouot \cite{alexis} has studied the convergence of the discrete spectrum  of $ -X +\frac 12 \g \Delta_\bbS$ in the limit $\g\to 0$ for negatively curved manifolds and has shown that it converges to the Pollicott-Ruelle resonances of the geodesic flow.  These resonances are a replacement of the spectrum of $X$ since its $L^2$-spectrum is equal to $i\R$ and they can be defined in various generalities of hyperbolic flows as pole of the meromorphically continued resolvent \cite{Liv04, FS11, DZ16a, DG16, DR16,BW17}.  
 A more general framework of semiclassical subelliptic operators that includes the kinetic Brownian motion for $\g\to 0$ has been established by Smith \cite{Smith}. 
In the limit of large random noise Li \cite{Li16} and Angst-Bailleul-Tardif \cite{angst} proved  that $\pi(Y_{\g t}^\g)$ converges weakly to the Brownian motion on $ \M$ with speed 2 as $\g\to \infty$ where $\pi\colon S\M\to \M$ is the projection. 
This rescaled kinetic Brownian motion is generated by $P_\g =-\g X +\frac 12 \g^2 \Delta_\bbS$ whereas the Brownian motion on the base manifold is generated by the Laplace operator $\frac 12 \Delta_\M$. 
Therefore, one may conjecture that the discrete spectrum of $P_\g$ converges to the Laplace spectrum. 
We will give a proof of this fact in the case of constant  curvature surfaces:
 \begin{theorem}\label{thm:evofPg}
Let $(\M, g)$ be an orientable  compact surface of constant curvature. For every $\eta\in\sigma(\Delta_\M)$ with multiplicity $n$ there is   an analytic function $\lambda_ \eta \colon ]r_\eta,\infty[ \to \C$ such that $\lambda_\eta(\gamma)$ is an eigenvalue of $P_\gamma$ with multiplicity at least $n$ and  $\lambda_\eta (\g) \to \eta$ as $\g\to\infty$.
\end{theorem}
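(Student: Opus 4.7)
The natural starting point is the principal $S^1$-bundle structure $\pi\colon S\M \to \M$ and the resulting decomposition
\[
L^2(S\M) \;=\; \bigoplus_{k\in\Z} H_k
\]
into weight-$k$ eigenspaces of the fiber rotation. On $H_k$ the fiber Laplacian $\Delta_\bbS$ acts as the scalar $k^2$, and the geodesic vector field couples only adjacent weight spaces, so $X = X_+ + X_-$ with $X_\pm\colon H_k \to H_{k\pm 1}$. Hence $P_\g$ is block-tridiagonal, dominated on each weight block $H_k$ with $k\neq 0$ by $\tfrac{\g^2}{2}k^2$. The subspace $H_0$ is canonically identified with $\pi^\ast L^2(\M)$, and a short fiber-averaging computation based on $(X^2 f)(x,v) = \mathrm{Hess}(f)(v,v)$ for $f\in C^\infty(\M)$ yields the key identity $\Pi_0 X^2 \Pi_0 = -\tfrac12 \Delta_\M$, with $\Pi_0$ the orthogonal projection onto $H_0$.

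To get an analytic eigenvalue branch converging to $\eta$, I would perform a Schur/Feshbach reduction with respect to $L^2(S\M) = H_0 \oplus H_0^\perp$. Since $\Delta_\bbS \geq 1$ on $H_0^\perp$, the block $D(\g) := \Pi_0^\perp P_\g \Pi_0^\perp$ is dominated by $\tfrac{\g^2}{2}\Delta_\bbS$, so $(D(\g)-\lambda)^{-1}$ exists and has norm $O(\g^{-2})$ uniformly for $\lambda$ in a fixed compact set. Using $\Pi_0 X \Pi_0 = 0$, the Schur complement takes the form
\[
E(\g, \lambda) \;=\; -\lambda\, I \;-\; \g^2\, \Pi_0 X (D(\g)-\lambda)^{-1} X \Pi_0 \;=\; -\lambda\, I + \Delta_\M + O(\g^{-1})
\]
as a bounded operator on $H_0 \cong L^2(\M)$; the leading order comes from $(D(\g)-\lambda)^{-1} \sim \tfrac{2}{\g^2}\Delta_\bbS^{-1}$ (noting $X\Pi_0$ lands in $H_{\pm 1}$, where $\Delta_\bbS=1$) combined with $\Pi_0 X^2 \Pi_0 = -\tfrac12 \Delta_\M$. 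Eigenvalues of $P_\g$ near $\eta$ then correspond to values of $\lambda$ where $E(\g,\lambda)$ is non-invertible, and applying analytic perturbation theory around the Riesz projection of $\Delta_\M$ at $\eta$ produces analytic eigenvalue branches converging to $\eta$.

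The main obstacle is the multiplicity claim $\dim\ker(P_\g - \lambda_\eta(\g)) \geq n$, because under a generic non-self-adjoint perturbation the $n$ eigenvalues produced in the previous step could split. Here the constant-curvature hypothesis is decisive: one writes $S\M = \G\backslash G$ with $G\in\{SU(2), \R^2\rtimes SO(2), \mathrm{PSL}(2,\R)\}$ the isometry group of the universal cover, and observes that $X$ and $\Delta_\bbS$ come from fixed elements of the universal enveloping algebra of $\mathfrak g$. Thus $P_\g$ commutes with the right-regular $G$-action on $L^2(\G\backslash G)$ and preserves each isotypic component. A Laplace eigenvalue $\eta$ of multiplicity $n$ corresponds to $n$ copies of a single irreducible spherical representation $\pi_\eta$ in $L^2(\G\backslash G)$ (with $\eta$ encoded via the Casimir), on each of which $P_\g$ restricts to the same abstract operator. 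Decomposing $\pi_\eta$ into its $K$-types reduces the eigenvalue problem on each copy to an explicit tridiagonal operator in $k$ amenable to direct analytic perturbation in $\g^{-1}$; the resulting eigenvalue $\lambda_\eta(\g)$ near $\eta$ depends only on $\pi_\eta$, so one gets the same analytic function on each of the $n$ copies, yielding $n$ linearly independent eigenvectors of $P_\g$ and the desired multiplicity.
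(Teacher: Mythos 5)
Your overall architecture — isolating the weight-$0$ space $H_0\cong L^2(\M)$, exploiting the ladder structure $X=X_++X_-$, the identity $\Pi_0 X^2\Pi_0=-\tfrac12\Delta_\M$, and getting the multiplicity from the fact that the $n$-dimensional Laplace eigenspace corresponds to $n$ isomorphic irreducibles on which the relevant operator acts by the same formula — is exactly the mechanism the paper uses. However, there are two genuine gaps.

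\textbf{The Feshbach--Schur step does not close as stated.} On $H_0^\perp$ you write $D(\g)=\Pi_0^\perp(\tfrac{\g^2}{2}\Delta_\bbS-\g X-\lambda)\Pi_0^\perp$ and expand $D(\g)^{-1}\sim\tfrac{2}{\g^2}\Delta_\bbS^{-1}$ with error $O(\g^{-1})$. The resolvent bound $\|D(\g)^{-1}\|=O(\g^{-2})$ is fine (accretivity plus $\Delta_\bbS\ge 1$ on $H_0^\perp$), but the Neumann expansion of $D(\g)^{-1}$ around $\tfrac{\g^2}{2}\Delta_\bbS$ involves powers of $\Delta_\bbS^{-1}X$, and $\Delta_\bbS^{-1}$ smooths only in the fiber direction while $X$ differentiates in the base direction. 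Consequently $\Delta_\bbS^{-1}X$ is an \emph{unbounded} operator on $H_0^\perp$, each higher term in the expansion of $\g^2\Pi_0 X D(\g)^{-1}X\Pi_0$ is a higher-order differential operator in the base variable, and the claimed identity $E(\g,\lambda)=-\lambda I+\Delta_\M+O(\g^{-1})$ cannot hold with an operator-norm $O(\g^{-1})$. This is precisely the obstruction the paper points out ("$\tfrac{1}{\g}X$ is no small perturbation in comparison with $\Delta_\bbS$"), and the paper circumvents it by first decomposing $L^2(S\M)$ into the Casimir eigenspaces $V_\eta$; restricted to a fixed $V_\eta$ (or the ladder $V_\eta'$) the operator $X$ is $\Delta_\bbS$-bounded with $\eta$-dependent constants, so that $T(x)=\Delta_\bbS+xX$ is a Kato type-(A) family and ordinary analytic perturbation theory applies. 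That is the reason the theorem only gives an $\eta$-dependent radius $r_\eta$; your Schur argument on the full space would produce something uniform, which the paper's Remark~\ref{rmk:problemsforstronger} shows is out of reach by this method.

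\textbf{The multiplicity justification is wrong, though the conclusion is right.} You write that since $X,\Delta_\bbS\in U(\mf g)$, the operator $P_\g$ "commutes with the right-regular $G$-action on $L^2(\G\backslash G)$." That is false: the differentiated right-regular action satisfies $\pi_R(h)\,\pi_R(Y)\,\pi_R(h)^{-1}=\pi_R(\Ad(h^{-1})Y)$, so only $\Ad$-invariant elements of $U(\mf g)$ (the center, e.g. the Casimir) commute with $\pi_R(G)$; $X$ and $V^2$ do not. The correct statement, which is what the paper's Lemma~\ref{la:furtherdecomp} establishes, is that $V_\eta$ decomposes into $m_\eta$ mutually orthogonal, $\mf g$-isomorphic ladders $\mc H_{f_i}$ each containing a one-dimensional $V_{\eta,0}$-piece, and that any $\mf g$-module isomorphism between them intertwines $P_\g$ (since $P_\g$ is built from $\mf g$). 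That intertwining property — not commutation with $G$ — is what forces the eigenvalue branch $\lambda_\eta(\g)$ to occur with multiplicity at least $m_\eta$. If you make these two corrections, the remainder of your second paragraph (restrict to a fixed irreducible, decompose into $K$-types, reduce to a tridiagonal one-parameter family, and perturb analytically around the weight-$0$ kernel) is exactly the argument the paper carries out, and your Schur reduction becomes superfluous.
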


Note that this theorem does not imply that in a compact set all eigenvalues of $P_\g$ are close to eigenvalues of the Laplacian (see Remark~\ref{rmk:problemsforstronger} for a discussion of the problems that prevent us from proving this stronger statement).

Another question to ask is whether the kinetic Brownian motion converges to equilibrium, i.e. 
\[\mathbb E_x[f(Y_{\g t}^\g)] \stackrel{t\to\infty}\longrightarrow \int_ {S\M}f.
\]
Baudoin-Tardif \cite{BT18} showed exponential convergence, i.e.
\[\left \|e^{-t  P_\g} f- \int_{S\M} f \right \|\leq C e^{-C_\g t} \left \|f-\int_{S\M}f\right \|, \quad f\in L^2(S\M).\]
We should point out that the given rate $C_\g$ converges to 0 as $\g\to\infty$ but they conjecture that the optimal rate converges to the spectral gap of $\Delta_\M$ which is the smallest non-zero Laplace eigenvalue $\eta_1$ (see \cite[Section 3.1]{BT18}). 
A direct consequence of Theorem \ref{thm:evofPg} shows that the optimal rate $C_\g$ is less than $\Re \lambda_{\eta_1}(\g)$ for  surfaces of constant  curvature. Hence $\limsup_{\g\to \infty} C_\g\leq \eta_1$.
For a more explicit study of the convergence towards equilibrium we prove a spectral expansion and explicit error estimates in the case of constant negative curvature in \cite{kww19}.

Note that a problem related to the kinetic Brownian motion in $S\M$ is the study of the hypoelliptic Laplacian on $T\M$ introduced by Bismut \cite{Bis05}. Like the kinetic Brownian motion the hypoelliptic Laplacian interpolates between the geodesic flow and the Brownian motion. In \cite[Chapter 17]{BL08} Bismut and Lebeau prove the convergence of the spectrum of the hypoelliptic Laplacian to the spectrum of the Laplacian on $\M$ using semiclassical analysis. It seems plausible that their techniques can also be transferred to the setting of kinetic Brownian motion and might give the spectral convergence without any curvature restriction. The purpose of this article is however not to attack this general setting but show that under the assumption of constant  curvature  allows to drastically reduce the analytical difficulties. In fact we are able to reduce the problem to standard perturbation theory. This is also the reason why we are able to obtain the explicit error estimates in \cite{kww19}. 

Let us give a short outline of the proof of Theorem~\ref{thm:evofPg}: By the assumption of constant  curvature we have a three-dimensional Lie algebra $\mf g = \langle X,X_\perp,V\rangle_\C$ of vector field on $S\M$. Denoting the Gaussian curvature by $K$, the operator $\Omega = -X^2-X_\perp^2-KV^2$ commutes with $\mf g$ and $P_\g$ and has discrete spectrum.
Hence, we can decompose the corresponding $L^2(S\M)$ into eigenspaces of $\Omega$. The generator $P_\g$ preserves this decomposition  of $L^2(S\M)$ and we can study the restriction of $P_\g$ on each occurring eigenspace separately. In each of these eigenspaces the spectral asymptotics of $P_\gamma$ can then be handled by standard perturbation theory of an operator family of type (A)  in the sense of Kato. For the calculations it will be important that each eigenspace of $\Omega$ can be further split into the eigenspaces of the vector field $V$ which correspond to the Fourier modes in the fibers of $S\M\to \M$.

The article is organized as follows:
We will give a short overview over the kinetic Brownian motion and the connection between constant curvature surfaces and the global analysis of sphere bundles of constant curvature surfaces  in Sections \ref{sec:kbb} and \ref{sec:surfaces}.
After that we will recall a few results of perturbation theory for unbounded linear operators (Section \ref{sec:pertth}) which are mostly taken from \cite{kato}. 
In the limit $\g\to\infty$ one would like to consider the geodesic vector field as a perturbation of the spherical Laplacian. 
The major difficulty is that $\frac{1}{\g}X$ is no small perturbation in comparison with $\Delta_\bbS$. 
After the spectral decomposition with respect to $\Omega$ there is a precise way to consider $X$ as small operator in any eigenspace of $\Omega$.
Afterwards we will give the proof of the convergence of the spectra (Section~\ref{sec:der}). 

{\bf Acknowledgements} We want to thank the anonymous referee for helpful comments that led to a much clearer form of the article. 
T. Weich acknowledges the support by the Deutsche Forschungsgemeinschaft (DFG) through the Emmy Noether group “Microlocal Methods for Hyperbolic Dynamics”(Grant No. WE 6173/1-1).

\section{Preliminaries}\label{sec:preliminaries}
\subsection{Kinetic Brownian Motion}\label{sec:kbb}
Let $\M$ be a compact Riemannian manifold of dimension $d\geq 2$ with sphere bundle $S\M=\{(x,v)\in T\M\mid \|v\| =1\}$. We introduce the spherical Laplacian $\Delta_\bbS$ as follows: for every $x \in \M$ the tangent space $T_x\M$ is a Euclidean vector space via the Riemannian metric and $S_x\M=\{v\in T_x\M\mid \|v\|=1\}$ is a submanifold of $T_x\M$. The inner product on $T_x\M$ induces a Riemannian structure on $S_x\M$. Hence, the (positive) Laplace-Beltrami operator $\Delta_\bbS (x)\coloneqq \Delta_{S_x\M}$  of $S_x\M$ defines an operator $C^\infty(S_x\M)\to C^\infty (S_x\M)$. We now obtain the spherical Laplace operator $\Delta_\bbS$ by  
\[\Delta_\bbS: C^\infty(S\M)\to C^\infty(S\M),\quad \Delta_\bbS f (x,v):= (\Delta_\bbS(x)f(x,\cdot))(v).\]

For $(x,v)\in S\M$ and $w \in T _{(x,v)} S\M$ we define $\theta_{(x,v)} (w) =g_x (v, T_{(x,v)} \pi \,w)$ where $\pi \colon S\M\to \M$ is the projection and $g$ is the Riemannian metric on $\M$. Then $\theta$ is a 1-form on $S\M$ and $\nu =\theta \wedge (d\theta)^{d-1}$ defines the Liouville measure on $S\M$ which is invariant under the geodesic flow $\phi_t$. The vector field $X = \d \phi_t^\ast$ is called the geodesic vector field.

Let us consider the operator $P_\g = -\g X + \frac 12 \g^2 \Delta_\bbS$ with domain $\dom(P_\g)=\{u\in L^2(S\M) \mid P_\g u\in L^2(S\M)\}$ for $\g > 0$. Note that the action of $P_\g $ has to be interpreted in the sense of distributions. We first want to collect some  properties of $P_\g$.

\begin{proposition}
\label{prop:kbb}
 $P_\g$ is a hypoelliptic operator with 
 $$\norm{f}_{H^{2/3}}\leq C(\norm f_{L^2}+\norm {P_\gamma f}_{L^2})\quad \text{for}\quad f\in \dom (P_\gamma).$$
  $P_\gamma$ is accretive (i.e. $\Re\langle P_\g f ,f\rangle \geq 0$) and coincides with the closure of $P_\g|_{C^\infty}$.
 Therefore, $P_\g$ has compact resolvent on $L^2(S\M)$, discrete spectrum with eigenspaces of finite dimension, and the spectrum is contained in the right half plane. $P_\g$ generates a positive strongly continuous contraction semigroup $e^{-tP_\g}$.
\end{proposition}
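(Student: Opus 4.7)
The plan is to derive everything from a single subelliptic estimate for $P_\gamma$; accretivity, the closure identity, compactness of the resolvent, and semigroup generation then follow by standard machinery. First I would establish the $H^{2/3}$ estimate. Choose a local orthonormal frame $V_1,\ldots,V_{d-1}$ on the fibers of $S\M\to\M$, so that $\Delta_\bbS = -\sum V_i^2$ up to a first-order term. The operator $P_\gamma = -\gamma X + \tfrac{1}{2}\gamma^2\Delta_\bbS$ is then of Kolmogorov type: a transport drift $X$ paired with a transverse diffusion in the fiber directions. The commutators $[V_i, X]$ produce the horizontal vector fields perpendicular to $X$, so $\{V_i, X, [V_i, X]\}$ spans $TS\M$. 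Assigning Rothschild--Stein weight $1$ to the $V_i$ and weight $2$ to the drift $X$, the brackets $[V_i, X]$ carry weight $3$, so the H\"ormander condition holds at step~$3$; this is exactly the situation in which the standard H\"ormander--Kohn subelliptic estimate delivers a gain of $2/3$ derivatives.

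Accretivity is an immediate computation on $C^\infty(S\M)$: invariance of the Liouville measure under the geodesic flow makes $X$ formally skew-adjoint, so $\Re\langle -\gamma X f, f\rangle = 0$, while $\langle \Delta_\bbS f, f\rangle \geq 0$ by positivity of the spherical Laplacian. That $P_\gamma$ equals the closure of $P_\gamma|_{C^\infty}$ follows from hypoellipticity together with a Friedrichs mollifier argument: given $u\in\dom(P_\gamma)$, one localizes by a partition of unity subordinate to coordinate charts on $S\M$ and convolves with a smooth kernel; the subelliptic estimate controls the commutator between mollifier and $P_\gamma$, so the resulting smooth functions approximate $u$ in the graph norm. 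The same subelliptic estimate now gives a continuous embedding $\dom(P_\gamma) \hookrightarrow H^{2/3}(S\M)$, and Rellich--Kondrachov on the compact manifold $S\M$ upgrades this to a compact embedding into $L^2(S\M)$; combined with m-accretivity (accretivity plus surjectivity of $P_\gamma + 1$, itself a consequence of the a priori estimate and a standard continuity argument) this yields a compact resolvent, hence discrete spectrum with finite-dimensional generalized eigenspaces, while accretivity confines the spectrum to the closed right half plane.

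Finally, since $P_\gamma$ is densely defined and m-accretive, the Lumer--Phillips theorem produces the strongly continuous contraction semigroup $e^{-tP_\gamma}$. For positivity I would invoke the Trotter product formula: the transport part $e^{t\gamma X}$ acts as pullback along the geodesic flow and is positivity preserving, the fiberwise heat semigroup $e^{-\frac{t\gamma^2}{2}\Delta_\bbS}$ is positivity preserving by the minimum principle on each sphere $S_x\M$, and strong limits of positive contractions remain positive. The main obstacle I expect is securing the precise $2/3$ subelliptic gain rather than mere hypoellipticity: one must verify that the drift weighting fits cleanly into the Rothschild--Stein framework and that the commutator structure reproduces the correct nilpotent model Lie algebra. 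Once this estimate is in hand, every other assertion reduces to routine semigroup-theoretic arguments.
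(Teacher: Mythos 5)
Your overall architecture matches the paper's: subelliptic estimate from the step-$3$ H\"ormander condition, accretivity from skew-symmetry of $X$ and positivity of $\Delta_\bbS$, then Lumer--Phillips and Rellich for the remaining spectral and semigroup assertions. Two of your steps, however, deviate in ways worth flagging.

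For $\overline{P_\g|_{C^\infty}} = P_\g$ and m-accretivity you propose a Friedrichs mollification and ``a standard continuity argument,'' and here there is a genuine gap. Friedrichs' commutator lemma controls $[\chi_\epsilon *, L]$ in $L^2$ only for first-order $L$; for the second-order term $\tfrac{\g^2}{2}\Delta_\bbS$ the commutator with a generic mollifier does \emph{not} tend to zero on the graph norm for $u$ merely in $\dom(P_\g)$, and the $H^{2/3}$ subelliptic gain is not enough to absorb a second-order commutator error. This is exactly the subtle point that can make minimal and maximal domains of a non-elliptic operator disagree, so it cannot be waved through. Likewise there is no generic continuity argument yielding surjectivity of $P_\g+1$. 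The paper handles both at once, following Helffer--Nier: one shows $\operatorname{Ran}(P_\g|_{C^\infty}+I)$ is dense by taking $f\perp\operatorname{Ran}$, noting $(P_{-\g}+I)f=0$ in $\mathcal D'$, invoking hypoellipticity of $P_{-\g}$ to get $f\in C^\infty$, and then accretivity to get $f=0$; this makes $\overline{P_\g|_{C^\infty}}$ maximal accretive, the adjoint is $\tfrac{1}{2}\g^2\Delta_\bbS+\g X$ on its maximal domain and is also maximal accretive, and from this one deduces $\overline{P_\g|_{C^\infty}}=P_\g$. You should replace your mollifier-plus-continuity argument with this duality argument (or supply the nontrivial commutator estimate you are implicitly asserting).

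Your positivity argument via the Trotter product formula is a genuinely different route from the paper's. The paper checks the Kato-type inequality $\langle(\sign f)P_\g f, u\rangle \geq \langle |f|, P_\g^\ast u\rangle$ against a strictly positive subeigenvector of $P_\g^\ast$ using the Arendt et al. criterion, combining positivity of the fiberwise heat semigroup and the geodesic pullback. Your Trotter approach is valid and arguably cleaner \emph{once} generation of $e^{-tP_\g}$ is established and one knows $P_\g$ is the closure of the sum on $\dom(\g X)\cap\dom(\Delta_\bbS)$, and it avoids having to produce a subeigenvector; it does lean on precisely the m-accretivity step above, so fixing that gap is a prerequisite.
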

 \begin{proof}
  See Appendix.
 \end{proof}
\subsection{Surfaces of Constant Curvature}\label{sec:surfaces}
Let $\M$ be a orientable compact  Riemannian manifold of dimension 2 and constant curvature and let $K$ be the Gaussian curvature. 
 Since $\M$ has finitely many connected components, let us  assume without loss of generality that $\M$ is connected. 
 We follow the notation of \cite{paternain2014}. Let $X$ be the geodesic vector field on $S\M$ , and let $V$ be the vertical
vector field so that $\Delta_\bbS=-V^2$. We define $X_\perp = [X, V ]$. We then have the commutator relations  $X=[V,X_\perp]$ and $[X,X_\perp]=-KV$. In particular, $\mf g\coloneqq \C X\oplus \C X_\perp\oplus \C V$ is a Lie algebra.
The Casimir operator $\Omega$ is defined as $\Omega = -X^2-X_\perp^2-KV^2$ and it is routine to check that $$[\Omega, X] = [\Omega , X_\perp] = [\Omega, V] = 0$$ using the above commutator relations. The Laplace operator $\Delta_{S\M}$ of $S\M$ for the metric which is declared by the requirement that the frame $\{X,X_\perp,V\}$ is an orthonormal basis (i.e.  $\Delta_{S\M}= -X^2-X_\perp ^2 -V^2$) is an elliptic operator on the compact manifold $S\M$ and hence it as discrete spectrum with eigenvalues of finite multiplicity. Since $\Delta_{S\M}$ is non-negative, each eigenvalue is non-negative and the eigenspaces are orthogonal.  Both operators $\Omega$ and $V$ leave these eigenspaces invariant. Thus we have the following decomposition:
$$L^2(S\M)= \bigoplus_{k\in \Z,\eta\in \sigma(\Omega)} V_{\eta,k}$$ where $V_{\eta,k} = \{u\in C^\infty(S\M)\mid Vu=ik u, \quad \Omega u =\eta u\}$ is finite-dimensional and the sum is countable. Since $V$ is skew-symmetric, the decomposition is orthogonal. The subspace $V_\eta = \bigoplus_{k\in \Z} V_{\eta,k}$ is the eigenspace of $\Omega$ which is invariant under all three vector fields $X,X_\perp,V$ and in particular invariant under $\Delta_\bbS$.

\begin{remark}
 Note that $\mf g$ is isomorphic to the complexification of $\mf {so}(3)$, $\R^2\rtimes \mf{so}(2)$, $\mf {sl}_2(\R)$ if $K>1$, $K=0$, $K<0$ respectively. We are essentially decomposing the representation of $\mf g$ on $L^2(S\M)$ into irreducible ones. 
 In fact in all three cases $S\M$ can be written as $\G\backslash G$ for some torsion free, discrete, cocompact subgroup $\G\subseteq G$ with $G\in \{PSL_2(\R), \R^2\rtimes SO(2), SO(3)\}$. The decomposition $L^2(S\M)=\bigoplus V_\eta$ can be seen  as a Plancherel decomposition of this space and $V_\eta =\bigoplus V_{\eta,k}$ as the decomposition into $K$-types or weights respectively.
 In all three cases the irreducible representations have explicit realizations on certain $L^2$-spaces (see e.g. \cite[Ch.~8]{taylor} for $\mf {sl}_2$) and one could go on by analyzing those but they do not contain more information than the abstract decomposition we provided here for all three cases at once.  
 We would like to note that this harmonic analysis point of view was our original approach motivated by previous works that used similar techniques for geodesic flows \cite{FF03, DFG15, GHW18, GHW18a, KW17}.
\end{remark}

Let us furthermore define $X_\pm \coloneqq \frac 12 (X\pm iX_\perp)$. We then have the commutator relations $$[V,X_\pm]=\pm i X_\pm \quad \text{and}\quad [X_+,X_-] = \frac 12 iKV.$$ Hence, $X_\pm \colon V_{\eta,k}\to V_{\eta,k\pm 1}$. Moreover, $X_\pm^\ast = - X_\mp$ and $\Omega = -2X_+X_--2X_-X_+-KV^2=-4X_+X_-+iKV-KV^2$.

The next lemma is crucial for our main result as  it connects the spectral values $\eta$ of $\Omega$ to the spectrum of the Laplace operator of the base manifold $\M$.
\begin{lemma}\label{la:furtherdecomp}
 Let $\Delta_\M$ be the Laplace-Beltrami operator of $\M$. Then $$\sigma(\Delta_\M)=\{\eta\in\sigma(\Omega)\mid V_{\eta,0}\neq 0\}=\sigma(\Omega)\cap \R_{\geq 0}.$$
 Moreover, $\mf g$ acts trivially on $V_{0,0}$ and  for $\eta\in \sigma(\Delta_\M)$, $\eta>0$, there is a closed $\mf g$-invariant subspace $V_\eta '$ of $V_\eta$ such that $V_{\eta,k}'\coloneqq V_{\eta,k} \cap V_\eta ' $ satisfies $\dim V_{\eta,k} ' \leq 1$ and $\dim V_{\eta,0} ' = 1$ and $V_\eta$ is isomorphic to $m_\eta$ orthogonal copies of $V_\eta'$ where $m_\eta$ is the multiplicity of the eigenvalue $\eta$ of $\Delta_\M$.
\end{lemma}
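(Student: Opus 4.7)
The plan is to identify $V_{\eta,0}$ with Laplace eigenspaces on $\M$, then use the ladder operators $X_\pm$ together with a central norm identity (forced by the two presentations of $\Omega$) to exhibit the irreducible $\mf g$-module $V_\eta'$ inside $V_\eta$.

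First I would identify $\ker V\cap L^2(S\M)$ with $\pi^\ast L^2(\M)$: since $V$ generates rotation in each fiber $S_x\M\cong S^1$, its kernel consists exactly of fiber-constant functions. Under this pullback $\Omega$ restricts to $\Delta_\M$: for $u=\pi^\ast f$ the term $-KV^2u$ vanishes, while a local-frame computation yields $(-X^2-X_\perp^2)u(x,v)=-(\nabla^2 f)(v,v)-(\nabla^2 f)(v^\perp,v^\perp)=(\Delta_\M f)(x)$. This gives $V_{\eta,0}\cong\ker(\Delta_\M-\eta)$, hence $\dim V_{\eta,0}=m_\eta$ and $\{\eta\in\sigma(\Omega):V_{\eta,0}\neq 0\}=\sigma(\Delta_\M)$.

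Second, combining $X_\pm^\ast=-X_\mp$ with the two presentations $\Omega=-4X_+X_-+iKV-KV^2=-4X_-X_+-iKV-KV^2$, I would derive the norm identity
\begin{equation*}
\|X_\pm u\|^2=\frac{\eta-Kk(k\pm 1)}{4}\,\|u\|^2, \qquad u\in V_{\eta,k}.
\end{equation*}
For $u\in V_{0,0}$ both right-hand sides vanish, so $X_+u=X_-u=Vu=0$ and $\mf g$ acts trivially on $V_{0,0}$. For the inclusion $\sigma(\Omega)\cap\R_{\geq 0}\subseteq\sigma(\Delta_\M)$, given a non-zero $u\in V_{\eta,k_0}$ with $\eta\geq 0$ (WLOG $k_0\geq 0$, else use $X_+$), the iterate $X_-^{k_0}u\in V_{\eta,0}$ has squared norm $\|u\|^2\prod_{m=1}^{k_0}\frac{\eta-Km(m-1)}{4}$, which is non-zero unless $\eta=Km(m-1)$ for some $1\leq m\leq k_0$. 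In these exceptional cases I would invoke the Peter-Weyl/Plancherel decomposition of $L^2(S\M)$ into irreducible $\mf g$-modules and use that in each of the three constant-curvature geometries every irreducible submodule with non-negative Casimir contains a non-zero weight-zero vector (the only irreducibles without are the discrete series in the $K<0$ case, all of which carry negative Casimir).

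Finally, for $\eta>0$, I would pick an orthonormal basis $u_1,\ldots,u_{m_\eta}$ of $V_{\eta,0}$ and let $V_\eta^{(i)}$ be the closure of the $\mf g$-submodule generated by $u_i$. Each intersection $V_\eta^{(i)}\cap V_{\eta,k}$ is spanned by the single vector $X_+^k u_i$ or $X_-^{|k|}u_i$, which gives $\dim(V_\eta^{(i)}\cap V_{\eta,k})\leq 1$ and $\dim(V_\eta^{(i)}\cap V_{\eta,0})=1$. Pairwise orthogonality $V_\eta^{(i)}\perp V_\eta^{(j)}$ reduces, via $V$-weight orthogonality, to checking $\langle X_\pm^ku_i,X_\pm^ku_j\rangle=0$; using $X_+^\ast=-X_-$ this equals $(-1)^k\langle u_i,X_\mp^kX_\pm^ku_j\rangle$, and since $X_\mp^kX_\pm^k$ is a polynomial in the commuting operators $\Omega$ and $V$ (by repeated use of $[X_+,X_-]=\frac{i}{2}KV$), it acts as a scalar on $V_{\eta,0}$, reducing the inner product to $\langle u_i,u_j\rangle=0$. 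Setting $V_\eta'\coloneqq V_\eta^{(1)}$, and applying the same ladder argument to any element of $(\bigoplus_iV_\eta^{(i)})^\perp\cap V_\eta$, yields $V_\eta=\bigoplus_{i=1}^{m_\eta}V_\eta^{(i)}\cong m_\eta V_\eta'$. The main obstacle I anticipate is this last exhaustion, together with the exceptional-value case: whenever a factor $\eta-Kk(k\pm 1)$ vanishes the ladder terminates prematurely, and the Plancherel decomposition is needed to exclude stray irreducible components orthogonal to $\bigoplus_i V_\eta^{(i)}$.
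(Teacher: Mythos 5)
Your overall strategy matches the paper's proof almost step for step: you identify $V_{\eta,0}$ with the Laplace eigenspaces, derive the norm identities $\|X_\pm u\|^2=\frac14\bigl(\eta-Kk(k\pm1)\bigr)\|u\|^2$ on $V_{\eta,k}$, use them to show $\mathfrak g$ kills $V_{0,0}$, climb the ladder to reach $V_{\eta,0}$, and build $V_\eta'=\mathcal H_{f_1}$ with the same orthogonality argument $\langle X_\pm^k u_i,X_\pm^k u_j\rangle=(-1)^k\langle u_i,X_\mp^kX_\pm^k u_j\rangle$. The one place you diverge is the identification of $\Omega|_{V_{\eta,0}}$ with $\Delta_\M$: you sketch a Hessian computation in a local orthonormal frame, whereas the paper computes $-4X_+X_-$ in isothermal coordinates using the explicit formulas from Paternain--Salo--Uhlmann. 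Both are fine.

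The genuine flaw is the ``exceptional case'' fallback. You worry that the ladder factor $\eta-Km(m-1)$ might vanish for some $1\le m\le k_0$ and propose to patch this by invoking the Plancherel decomposition and a classification of irreducibles with weight-zero vectors. This is unnecessary, and part of your justification is not correct. It is unnecessary because the nonnegativity of $\|X_+u\|^2$ on the nonzero space $V_{\eta,k_0}$ (assuming $K>0$, $k_0\ge1$, the only regime where $\eta-Km(m-1)=0$ with $\eta>0$ could occur) forces $\eta\ge Kk_0(k_0+1)$, and since $k_0(k_0+1)>m(m-1)$ for $1\le m\le k_0$ this gives $\eta-Km(m-1)>0$ for every step of the ladder --- there is no exceptional case. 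This is exactly the paper's ``analyzing the quadratic equations'' step. Moreover, your parenthetical that every discrete series in the $K<0$ case carries strictly negative Casimir is false: the lowest-weight-$1$ module has $\eta=K\cdot1\cdot0=0$, not $\eta<0$. This does not break the lemma (the $\eta=0$ case is treated separately via constants, and the ladder argument is only needed for $\eta>0$), but it signals that the representation-theoretic fallback was not carefully checked, when in fact the elementary inequality above renders it superfluous and is the cleaner route.
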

\begin{proof} Let $\eta\in \sigma(\Omega)$ with $V_{\eta,0}\neq 0$. On $V_{\eta,0}\neq 0$ the operator $\Omega$ equals the non-negative operator $\Delta_{S\M}$ and acts by $\eta$. Therefore,  $\eta\geq 0$. Conversely, pick $\eta\geq 0 $ in $\sigma(\Omega)$, i.e. $V_{\eta,k_0}\neq 0$ for some $k_0 \in \Z$. If $k_0=0$ we are done. For $\eta=0$ the space $V_{0,0}$ consists of constant functions and therefore $V_{0,0}$ is non-zero. Equation~\eqref{eq:normXpm} below shows that $X_\pm$ vanish on $V_{0,0}$ and therefore $V_{0,0}$ is a trivial $\mf g$-space. For $\eta>0$ consider the operators $X_\pm X_\mp$: We have 
 $$X_+X_- = -\frac 14 (\eta +Kk-Kk^2)\quad \text{and}\quad X_-X_+ = -\frac 14 (\eta-Kk-Kk^2)$$ are scalar on $V_{\eta,k}$. Since $X_\pm^\ast = - X_\mp$ we have \begin{align}
 \|X_+ u \|^2 = \frac 14 (\eta-Kk-Kk^2)\|u\|^2 \quad \text{and}\quad \|X_-u\|^2=\frac 14 (\eta +Kk-Kk^2)\|u\|^2 \label{eq:normXpm}
 \end{align}
for $u\in V_{\eta,k}$. In particular,  if $V_{\eta,k}\neq 0$ then $\eta\mp Kk-Kk^2 \geq 0$ and if $\eta\mp Kk-Kk^2 >0$ then $X_\pm\colon V_{\eta,k}\to V_{\eta,k\pm1}$ is injective.
More specifically, $X_\pm$ are both injective for all $k\in \Z$ in the case of $K\leq 0$. This implies that $V_{\eta,0}\neq 0$ in this case. 

If $K$ is positive, we may assume $k_0>0$. The case $k_0<0$ is handled similarly. As mentioned above it holds that $\eta-  Kk_0-Kk_0^2 \geq 0$. Analyzing the quadratic equations we observe $ \eta+  Kk-Kk^2 >0$ for $k=1,\ldots,k_0$. Hence 
$$\xymatrix{V_{\eta,k_0}\ar[r]^{X_-}&V_{\eta,k_0-1}\ar[r]^{X_-}&\cdots \ar[r]&V_{\eta,1}\ar[r]^{X_-}&V_{\eta,0}}$$ are all injective. We infer $V_{\eta,0}\neq 0$.

 Pick $f\in V_{\eta,0}$ and define $\mc H_f$ as closure of the $\mf g$-invariant subspace generated by $f$. Since $X_\pm X_\mp$ are scalar on $V_{\eta,k}$ we see that $\mc H_f =\ov{\operatorname{span}}  \{X_+^kf,X_-^k f\mid k\in\N_0\}$ and it follows that $\mc H_f\cap V_{\eta,k}$ is at most one-dimensional. Moreover, $\mc H_f$ for different $f\in V_{\eta,0}$ are isomorphic as $\mf g$-spaces  and we claim  $\mc H_g\perp \mc H_f$ for $g\perp f$.
 Since $V_{\eta,k}$ are orthogonal for different $k$ we only have to verify $(\mc H_f\cap V_{\eta,k})\perp (\mc H_g\cap V_{\eta,k})$. But as this subspace is given by $\C X_{\sign k} ^{|k|} f$ and  $\C X_{\sign k} ^{|k|} g$ respectively we need to show $\langle X_\pm^l f, X_\pm l g>=0$ for $l\geq 0$. As $\langle X_\pm^l f, X_\pm ^l g> = (-1)^l \langle X_\mp^l X_\pm^l f,g\rangle$ and $X_\pm X_\mp$ is scalar on $V_{\eta,k}$ the claim follows.
 
 If we  pick   an orthonormal basis $f_1,\ldots,f_n$ of $V_{\eta,0}$, then $\bigoplus \mc H_{f_i} \subseteq V_\eta$ and the above argument shows that equality holds. Hence, we can choose $V_{\eta}' = \mc H_{f_1}$.
 
 It remains to verify that $\Omega =\Delta_\M$ on $V_{\eta,0}\subseteq L^2(\M)$. Since $V=0$ on this space  we need to calculate $-4X_+X_-$. We use isothermal coordinates, i.e. coordinates $(x,y)$ such that the metric on $\M$ is given by $ds^2 = e^{2\lambda}(dx^2+dy^2)$. Then $\Delta_\M = -e^{-2\lambda }\left(\frac{\partial^2}{\partial x^2}+\frac{\partial^2}{\partial y^2}\right)$. Furthermore, we have (see \cite{paternain2014}):
 \begin{align*}
X_+(u) &= e^{(k-1)\lambda}\partial(h e^{-k\lambda})e^{i(k+1)\theta} \quad\mbox{ and }\\
X_-(u) &= e^{(-k-1)\lambda}\overline\partial(h e^{k\lambda})e^{i(k-1)\theta}
\end{align*}
where $u(x,y,\theta)=h(x,y)e^{ik\theta}\in V_{\eta,k}$ and $\theta$ is the angle between a unit vector and $\frac{\partial}{\partial x}$.
With this notation we have for $h\in V_{\eta,0}$
\[
X_+X_-h=X_+ (e^{-\lambda}\overline{\partial}(h)e^{-i\theta})=e^{-2\lambda}\partial\overline{\partial}(e^{-\lambda}he^\lambda)= e^{-2\lambda}\partial\overline{\partial}h =  -\frac 14 \Delta_\M h.
\]
This completes the proof.
\end{proof}

\subsection{Perturbation Theory}\label{sec:pertth}

We want to collect some basic results from perturbation theory for linear operators that can be found in \cite{kato}.
First, we introduce families of operators we want to deal with.

\begin{definition}[{see \cite[Ch. VII \S2.1]{kato}}]
 A family $T(x)$ of closed operators on a Banach space $X$ where $x$ is an element in a domain $D\subseteq \C$ is called \emph{holomorphic of type (A)} if the domain of $T(x)$ is independent of $x$ and $T(x)u$ is holomorphic for every $u\in\operatorname{dom}(T(x))$.  
\end{definition}

Without loss of generality let us assume that 0 is contained in the domain $D$. We call $T=T(0)$ the unperturbed operator and $A(x)=T(x)-T$ the perturbation. Furthermore, let $R(\zeta,x)= (T(x)-\zeta)\inv$ be the resolvent of $T(x)$ and $R(\zeta)=R(\zeta,0)$. If $\zeta\notin \sigma(T)$ and $1+A(x)R(\zeta)$ is invertible then $\zeta \notin \sigma(T(x))$ and the following identity holds:
\begin{align}\label{eq:resolventformula2}R(\zeta,x)=R(\zeta)(1+A(x)R(\zeta))\inv.\end{align}

Let us assume that $\sigma(T)$ splits into two parts by a closed simple $C^1$-curve $\G$. Then there is $r>0$ such that $R(\zeta,x)$ exists for $\zeta\in \G$ and $|x|<r$ (see \cite[Ch. VII Thm. 1.7]{kato}).
If the perturbation is linear (i.e. $T(x)=T+xA$) then a possible choice for $r$ is given by $\min_{\zeta\in\Gamma}\|AR(\zeta)\|^{-1}$. Note that $AR(\zeta)$ is automatically bounded by the closed graph theorem.
In particular, we obtain that $\Gamma\subseteq \C\setminus\sigma(T(x))$ for $|x|<r$, i.e. the spectrum of $T(x)$  still splits into two parts by $\Gamma$. Let us define $\sigma_{\interior}(x)$ as the part of $\sigma(T(x))$ lying inside $\Gamma$ and $\sigma_{\ext}(x)=\sigma(T(x))\setminus\sigma_{\interior}(x)$. 
The decomposition of the spectrum gives a $T(x)$-invariant decomposition of the space $X= M_{\interior}(x)\oplus M_{\ext}(x)$ where $M_{\interior}(x)=P(x)X$ and $M_{\ext}(x)=\ker P(x)$ with the bounded-holomorphic projection \[P(x)=-\frac{1}{2\pi i}\int_\Gamma R(\zeta,x)d\zeta.\] 
Furthermore, $\sigma(T(x)|_{M_{\interior}(x)})=\sigma_{\interior}(x)$ and $\sigma(T(x)|_{M_{\ext}(x)})=\sigma_{\ext}(x)$. 
To get rid of the dependence of $x$ in the space $M_{\interior}(x)$ we will use the following proposition.

\begin{proposition}[see {\cite[Ch. II \S4.2]{kato}}]
Let $P(x)$ be a bounded-holomorphic family of projections on a Banach space $X$ defined in a neighbourhood of 0.
 Then there is a bounded-holomorphic family of operators $U(x)\colon X\to X$ such that $U(x)$ is an isomorphism for every $x$ and $U(x)P(0)=P(x)U(x)$. In particular, $U(x) P(0) X = P(x)X$ and $U(x)\ker P(0) = \ker P(x)$.
\end{proposition}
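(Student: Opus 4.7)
The plan is to construct $U(x)$ as the fundamental solution of a linear operator-valued ODE in the Banach algebra $B(X)$, with generator chosen so that the intertwining relation $U(x)P(0) = P(x)U(x)$ becomes automatic. Since $P(x)$ is bounded-holomorphic, so is the derivative $P'(x)$. Differentiating $P(x)^2 = P(x)$ gives $P'P + PP' = P'$; multiplying on the left by $P$ and using $P^2 = P$ yields the key identity $PP'P = 0$. Setting $Q(x) := [P'(x),P(x)] = P'(x)P(x) - P(x)P'(x)$, a short calculation then gives
\[
P'(x) = [Q(x), P(x)].
\]

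Next, I would define $U(x)$ as the unique holomorphic solution of the operator ODE
\[
U'(x) = Q(x)\,U(x), \qquad U(0) = I,
\]
in a neighborhood of $0$. Existence of a bounded-holomorphic solution follows from Picard iteration applied to the integral equation $U(x) = I + \int_0^x Q(\zeta)U(\zeta)\,d\zeta$ along a path in the domain of holomorphy, with convergence of the resulting Neumann-type series in $B(X)$ being standard.

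To verify the intertwining relation, set $V(x) := U(x)^{-1} P(x) U(x)$ (assuming invertibility of $U(x)$, addressed below). Using $(U^{-1})' = -U^{-1} Q$, a direct computation yields
\[
V'(x) = U(x)^{-1}\bigl(P'(x) - [Q(x), P(x)]\bigr)U(x) = 0,
\]
so $V(x) \equiv V(0) = P(0)$, i.e.\ $U(x)P(0) = P(x)U(x)$. This immediately gives both $U(x)P(0)X = P(x)X$ and $U(x)\ker P(0) = \ker P(x)$.

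Invertibility of $U(x)$ I would obtain by introducing the companion ODE $\tilde U'(x) = -\tilde U(x) Q(x)$ with $\tilde U(0) = I$, again bounded-holomorphic by Picard iteration. Then $(\tilde U U)' = 0$ gives $\tilde U U = I$, and setting $Y := U \tilde U$ one finds $Y' = [Q, Y]$, $Y(0) = I$, so by uniqueness of solutions to this linear ODE (for which $Y \equiv I$ is a solution) one concludes $Y \equiv I$, hence $U(x)^{-1} = \tilde U(x)$ exists and is bounded-holomorphic. The main technical point of the whole argument is really just the commutator identity $P' = [Q, P]$, which rests on the observation $PP'P = 0$ coming from idempotence of $P(x)$; once this is in hand, the remainder is routine ODE bookkeeping in $B(X)$.
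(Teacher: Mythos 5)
Your construction is precisely Kato's own argument from Ch.~II~\S4.2 of \cite{kato}, which is exactly the reference the paper cites for this proposition: the commutator $Q=[P',P]$, the identity $PP'P=0$ leading to $P'=[Q,P]$, the pair of ODEs $U'=QU$ and $\widetilde U'=-\widetilde UQ$ with $U(0)=\widetilde U(0)=I$, and the observation that $U^{-1}PU$ is constant. The proof is correct and identical in approach to the source the paper invokes.
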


Denoting $U(x)^{-1} T(x) U(x)$ as $\widetilde{T}(x)$ we observe \[\sigma(\widetilde T(x)|_{M_{\interior}(0)})= \sigma(\widetilde T(x)) \cap \operatorname{int}(\G) = \sigma(T(x)) \cap \operatorname{int}(\G)\] since $U(x)$ is an isomorphism. Here we denote the interior of $\G$ by $\operatorname{int}( \G)$.

Let us from now on suppose that $\Gamma$   encloses an eigenvalue $\mu$ of $T$ with finite multiplicity  and  no other eigenvalues of  $T$. Then $\sigma_{\interior}(0)=\{\mu\}$  and $M_{\interior}(0)$ is finite dimensional.
Hence, $\widetilde T(x)|_{M_{\interior}(0)}$ is a holomorphic family of  operators on a finite dimensional vector space. It follows that the eigenvalues of $T(x)$ are continuous as a function in $x$.
In addition to the previous assumptions, let us suppose that the eigenvalue $\mu$ is simple. 
Then $M_{\interior}(0)$ is one-dimensional and $\widetilde T(x)|_{M_{\interior}(0)}$ is a scalar operator.
We obtain that there is a holomorphic function $\mu\colon B_r\to\C$ (with $r=\min_{\zeta\in\Gamma}\|AR(\zeta)\|^{-1}$ as above) such that $\mu(x)$ is an eigenvalue of $T(x)$, $\mu(x)$ is inside $\Gamma$ and $\mu(x)$ is the only part of $\sigma(T(x))$ inside $\Gamma$ since $\sigma_{\interior}(x)=\sigma(\widetilde T(x)|_{M_{\interior}(0)})$.

We now want to calculate the Taylor coefficients of $\mu(x)$ in order to get an approximation of $\mu(x)$  in the case where $X=\mc H$ is a Hilbert space and $T(x)$ is a holomorphic family of type (A) with symmetric $T$ but not necessarily symmetric $T(x)$ for $x\neq 0$.
To this end let $\varphi(x)$ be a normalized holomorphic family of  eigenvectors (obtained from $P(x)$).
Consider the Taylor series $\mu(x)=\sum  x^n \mu^{(n)}$,  $\varphi(x)=\sum  x^n \varphi^{(n)}$ and  $T(x)u=\sum x^n T^{(n)} u $ for every $u \in \dom(T)$ which converges on a disc of positive radius independent of $u$. This is due to the fact that Taylor series of holomorphic functions converge on every disc that is contained in the domain.

We compare the Taylor coefficients in \begin{align*}
(T(x)-\mu(x))\varphi(x)=0\qquad \text{and} \qquad\langle(T(x)-\mu(x))\varphi(x),\varphi(x)\rangle=0
\end{align*}
and obtain \begin{equation*}
(T-\mu^{(0)})\varphi^{(l)}=-\sum_{n=1}^l(T^{(n)}-\mu^{(n)})\varphi^{(l-n)}\end{equation*} and
\begin{align*}
\mu^{(k)}= &\langle T^{(k)}\varphi^{(0)},\varphi^{(0)}\rangle+\sum_{n=1}^{k-1}\langle(T^{(n)}-\mu^{(n)})\varphi^{(k-n)}, \varphi^{(0)}\rangle.
\end{align*}
A fortiori,
\begin{align}
\mu^{(1)}&= \langle T^{(1)}\varphi^{(0)},\varphi^{(0)}\rangle \label{eq:firstderivative} \\ 
\mu^{(2)}&= \langle T^{(2)}\varphi^{(0)},\varphi^{(0)}\rangle+\langle(T^{(1)}-\mu^{(1)})\varphi^{(1)}, \varphi^{(0)}\rangle, \label{eq:secondderivative} \end{align}
where $\varphi^{(1)}$ fulfils \begin{equation}(T-\mu^{(0)})\varphi^{(1)}=-(T^{(1)}-\mu^{(1)})\varphi^{(0)}.\label{eq:derivativevector}
 \end{equation}
  For $\varphi^{(1)}$ being  uniquely determined we can use the additional assumption that $\varphi(x)$ is normalized. However  $\mu^{(2)}$ can be calculated without this consideration in our setting.
Here $\varphi^{(1)}=v+c\varphi^{(0)}$ with unique $v\in \ker(T^{(0)}-\mu^{(0)})^\perp$ as $T^{(0)}$ is symmetric. 
We infer that 
\begin{align*}
 \langle(T^{(1)}-\mu^{(1)})\varphi^{(1)}, \varphi^{(0)}\rangle&=\langle(T^{(1)}-\mu^{(1)})v, \varphi^{(0)}\rangle-c\mu^{(1)}+c\langle T^{(1)}\varphi^{(0)}, \varphi^{(0)}\rangle \\&= \langle(T^{(1)}-\mu^{(1)})v, \varphi^{(0)}\rangle.
\end{align*}
 Therefore, $\mu^{(2)}$ depends only on $v$ and not on $c$.

\section{Perturbation Theory of the Kinetic Brownian Motion}
\label{sec:der}

 We want to establish the limit $\gamma\to \infty$ of the spectrum of $P_\g$. 
 To do so we write $P_\g= \frac{\g^2}2 (\Delta_\bbS-2\g^{-1} X )=\frac{\g^2}2 T(-2\g^{-1})$ where  $T(x)=\Delta_\bbS+xX$ and we want to use the methods established in Chapter \ref{sec:pertth}. 
 
In order to have finite dimensional eigenspaces and holomorphic families of type (A) we will use the orthogonal eigenspace decomposition of $L^2(S\M)$ derived in Section \ref{sec:surfaces}:
\[L^2(S\M) = \bigoplus_{\eta,k}V_{\eta,k}\]
where $V_{\eta,k}=\{u\in C^\infty(S\M)\mid \Omega u= \eta u, Vu=iku\}$.

\begin{proposition}
 The family of operators $T(x), \, x\in \C,$ restricted to $V_\eta$ defines a holomorphic family of type (A) with domain $H^2(S\M)\cap V_\eta$. The same is true for $V_{\eta}'$.
\end{proposition}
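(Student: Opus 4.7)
The plan is to verify the three defining conditions of a type~(A) family in turn: constancy of the domain, closedness of $T(x)$ for each $x$, and pointwise holomorphy in $x$. The key structural observation is that on $V_\eta$ the Casimir $\Omega$ acts as the scalar $\eta$, so one can trade the non-elliptic operator $\Delta_\bbS=-V^2$ against the elliptic full Laplacian $\Delta_{S\M}=-X^2-X_\perp^2-V^2$ of $S\M$.

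First, I would record that $V_\eta$ is preserved by $X,X_\perp,V$ (noted in Section~\ref{sec:surfaces}) and hence by $\Delta_\bbS$, so the image $T(x)u$ lies in $V_\eta$ whenever $u$ does. To pin down the domain I would use the identity $\Omega=\Delta_{S\M}+(1-K)V^2=\Delta_{S\M}-(1-K)\Delta_\bbS$, which on $V_\eta$ reads
\begin{equation*}
\Delta_{S\M}u \;=\; \eta u + (1-K)\Delta_\bbS u.
\end{equation*}
For $K\neq 1$ this shows that $u\in V_\eta$ satisfies $\Delta_\bbS u\in L^2$ if and only if $\Delta_{S\M}u\in L^2$, which by elliptic regularity of $\Delta_{S\M}$ on the compact manifold $S\M$ is equivalent to $u\in H^2(S\M)$; for $K=1$ one has $\Omega=\Delta_{S\M}$ and $V_\eta$ is finite-dimensional, so the claim is trivial. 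Since $X$ is first order, $X\colon H^2(S\M)\to L^2(S\M)$ is bounded, so $T(x)$ is defined on $H^2(S\M)\cap V_\eta$ for every $x\in\C$, with this domain independent of~$x$.

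Next I would prove closedness. Elliptic interpolation yields the relative bound
\begin{equation*}
\|Xu\|_{L^2} \le \varepsilon\|\Delta_{S\M}u\|_{L^2}+C_\varepsilon\|u\|_{L^2},\qquad \varepsilon>0,
\end{equation*}
so on $V_\eta$, in view of the identity above, $xX$ is $\Delta_\bbS$-bounded with relative bound zero. Since $\Delta_\bbS$ restricted to the closed invariant subspace $V_\eta$ is itself closed, the Kato stability theorem \cite[Ch.~IV~\S1.1]{kato} gives closedness of $T(x)$ on $H^2(S\M)\cap V_\eta$ for every $x\in\C$. Holomorphy of $x\mapsto T(x)u=\Delta_\bbS u+xXu$ is immediate, since the map is affine in $x$.

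The argument for $V_\eta'$ is identical: by construction $V_\eta'$ is a closed $\mf g$-invariant subspace of $V_\eta$, hence is preserved by both $X$ and $\Delta_\bbS$, and the domain identification and relative boundedness above restrict directly to it. The only mildly subtle step I foresee is recognising that the non-elliptic operator $\Delta_\bbS$ nevertheless has maximal domain $H^2(S\M)\cap V_\eta$ on the Casimir eigenspace; once the relation with $\Delta_{S\M}$ is invoked, no serious obstacle remains.
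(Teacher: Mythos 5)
Your proof is correct and rests on the same key ideas as the paper's: the identity $\Omega=\Delta_{S\M}-(1-K)\Delta_\bbS$, which on $V_\eta$ converts the degenerate operator $\Delta_\bbS$ into the elliptic $\Delta_{S\M}$, together with elliptic regularity of $\Delta_{S\M}$ on the compact manifold $S\M$. The difference is in how closedness is established. The paper observes that $\Delta_{S\M}+cX$ is elliptic for \emph{every} $c\in\C$, so $H^2(S\M)=\{u\in L^2:(\Delta_{S\M}+cX)u\in L^2\}$; after the $\Omega$-substitution this identifies $H^2\cap V_\eta$ at a stroke as the \emph{maximal} domain of $T(x)|_{V_\eta}$ for every $x$, and closedness is immediate as a restriction of the closed elliptic operator. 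You instead use the ellipticity of $\Delta_{S\M}$ alone (i.e.\ the case $c=0$) to pin down $\dom(\Delta_\bbS|_{V_\eta})=H^2\cap V_\eta$, and then invoke Kato's stability theorem for a relatively bounded perturbation of relative bound zero to transfer closedness to $T(x)$. Both routes are valid; the paper's version is slightly more economical and, importantly, also proves that $H^2\cap V_\eta$ is the maximal restriction domain of $T(x)$ for $x\neq 0$, which is what lets one identify $T_\eta(x)$ with the actual restriction of $P_\gamma$ (whose domain is defined maximally). Your argument verifies only that $T(x)$ with the prescribed domain is closed; to see that nothing larger should be taken, one should add the paper's observation that $\Delta_{S\M}+cX$ remains elliptic. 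Your treatment of $K=1$ via finite-dimensionality of $V_\eta$ (true since $K>0$ bounds $|k|$ by \eqref{eq:normXpm}) is a clean alternative to the paper's substitution $\Delta_{S\M}\rightsquigarrow\Delta_{S\M}+\Delta_\bbS$.
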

\begin{proof}
 Since $\Delta_{S\M }$ is a second-order elliptic differential operator, we have $H^2(S\M)=\{u\in L^2(S\M)\mid (\Delta_{S\M} +xX) u \in L^2(S\M)\}$ for each $x\in \C$.
 The space $V_\eta$ is invariant under $\Delta_{S\M}$ so that $H^2(S\M)\cap V_\eta = \{u\in V_\eta \mid (\Delta_{S\M} +xX) u \in V_\eta \}$. We now use that $\Delta_{S\M} = \Omega +(1-K)\Delta_\bbS$ and $\Omega=\eta $ on $V_\eta$. Therefore, $H^2(S\M)\cap V_\eta = \{u\in V_\eta \mid (1-K)\Delta_\bbS +xX) u \in V_\eta \} = \dom(T(x/(1-K))|_{V_\eta})$ for $K\neq 1$. For $K=1$ the same argument works if we replace $\Delta_{S\M}$ by $\Delta_{S\M}+\Delta_\bbS$.  Since $T(x)|_{V_\eta}$ is closed as a restriction of a closed operator the proposition is proven. The proof for $V_{\eta}'$ is identically.
\end{proof}

We denote the restriction of $T(x)$ to $V_\eta$ by $T_\eta(x)$.

The eigenspaces of the unperturbed operator $\Delta_\bbS|_{V_\eta}$ are $V_{\eta,0}$ and $V_{\eta,k}\oplus V_{\eta,-k}$ which are finite dimensional. 
As we have seen in Section~\ref{sec:pertth} the eigenvalues of a holomorphic family of type (A) are continuous as a function of $x$ in this case. We deduce that for the eigenvalues $\mu(x)$ of $T_\eta(x)$ that arise from non-zero eigenvalues $\mu = \mu(0)$ of $\Delta_\bbS|_{V_\eta}$ the limit $\gamma\to \infty$ of $\frac{\gamma^2}2\mu(2 \gamma\inv)$, which is an eigenvalue of $P_\gamma$, is $\infty$.
Therefore, we are only interested in eigenvalues of $T_\eta(x)$ which arise from the unperturbed eigenvalue 0. 
In order to have that $0$ is an eigenvalue of $T_\eta(0)$ we must have $V_{\eta,0}\neq 0$, i.e. $\eta \in\sigma(\Delta_\M)$ by Lemma~\ref{la:furtherdecomp}.
Let us first deal with $\eta=0$. Here $\mf g$ acts trivially on $V_{0,0}$ by  Lemma~\ref{la:furtherdecomp} and therefore the eigenvalue of  $T_0(x)$ that arises from the eigenvalue 0 is 0. If $\eta>0$ we restrict $T_\eta(x)$ to $V_\eta'$ which defines a holomorphic family of type (A) as well.
In this case the eigenspace of the unperturbed eigenvalue is $V_{\eta,0}'$ which is one-dimensional. Hence, we are in the precise setting of Section~\ref{sec:pertth}. We obtain that there is a holomorphic function $\mu$ defined on a neighbourhood of 0 (depending on $\eta$) such that $\mu(x)$ is an eigenvalue of $T(x)|_{V_\eta'}$ with $\mu(0)=0$.

Let $\varphi(x)$ be a corresponding holomorphic normalized eigenvector, in particular $\varphi(0)\in V_{\eta,0}'$.
 We can use Equation \eqref{eq:firstderivative} from Section \ref{sec:pertth}:  
\begin{align*}
\mu'(0)=\langle X \varphi(0), \varphi(0)\rangle= \frac 12\langle (X_++X_-) \varphi(0),\varphi(0)\rangle.
\end{align*}
 Due to the fact that $X_\pm$ are raising respectively lowering operators, i.e. $X_\pm V_{\eta,k}\subseteq V_{\eta,k\pm1}$, we conclude that $\mu'(0)=0$. 

We now want to find the second derivative $\mu''(0)$ of $\mu$. 
According to Section \ref{sec:pertth} we first have to compute $\varphi'(0)$ via $\Delta_\bbS\varphi'(0)=-X\varphi(0)$ (see Equation~ \eqref{eq:derivativevector}). 
Notice that $X\varphi(0)\in V_{\eta,-1}'\oplus V_{\eta,1}' = \{u\mid \Delta_\bbS u =u\}$. Furthermore $\ker(\Delta_\bbS|_{V_\eta'})=V_{\eta,0} '$, and consequently  $\varphi'(0)=-X\varphi(0)+\phi_0$ for some $\phi_0 \in V_{\eta,0}'$. Let us recall that $\mu''(0)$ is independent of $\phi_0$. 
Consequently by Equation \eqref{eq:secondderivative}, \begin{align*}
\mu''(0)  =& 2\langle X(-X\varphi(0)), \varphi(0)\rangle=-\frac{1}{2}\langle (X_++X_-)^2\varphi(0),\varphi(0)\rangle\\ =& -\frac12 \langle ( X_+^2+ X_+X_-+ X_-X_++ X_-^2)\varphi(0),\varphi(0)\rangle.
\intertext{Again, $X_\pm$ are raising/lowering operators. Therefore, }
\mu''(0)&=-\frac12\langle (X_+X_- +X_-X_+)\varphi(0),\varphi(0)\rangle \\
&=\langle \Omega \varphi(0),\varphi(0)\rangle = \eta
\end{align*}
as the Casimir operator $\Omega$ equals $-2X_+X_--2X_-X_+-KV^2$ and $V \varphi(0)=0$.

If we now substitute $x=-2\g\inv$ and   observe that $\lambda_\eta(\g)=\frac {\g^2} 2 \mu(-2\g\inv)$ is an eigenvalue of $P_\g$ we obtain Theorem~\ref{thm:evofPg}.

\begin{remark}\label{rmk:problemsforstronger}
 In order to obtain  uniform convergence  of the eigenvalues in compact sets we would like to deal with all $\eta\in \sigma(\Delta_\M)$ simultaneously in a uniform way. More precisely, we want to separate converging eigenvalues (which arise from $0$) from non-converging eigenvalues. For this to happen we must have that $1+xX(\Delta_\bbS-\zeta)\inv$ on $V_\eta'$ is invertible for small $|x|$ and  for $\zeta$ in some closed curve enclosing $ 0$ but no other element of $\sigma(\Delta_\bbS)=\{k^2\mid k\in\Z\}$. In particular, $1+xX(\Delta_\bbS-\zeta)\inv$ has to be invertible for some $\zeta \in (0,1)$ but we can only ensure this for $|x|<\|X(\Delta_\bbS-\zeta)\inv|_{V_\eta'}\|\inv$. As 
 \begin{align*}\|X(\Delta_\bbS-\zeta)\inv|_{V_\eta'}\| &\geq\|X(\Delta_\bbS-\zeta)\inv|_{V_{\eta,0}'}\|= |\zeta|\inv\|X|_{V_{\eta,0}'}\|=|\zeta|\inv\sqrt{\|X_+|_{V_{\eta,0}'}\|^2+\|X_-|_{V_{\eta,0}'}\|^2}\\
 &=|\zeta|\inv\sqrt{\frac 12 \eta}\geq \sqrt{\frac 12 \eta}
 \end{align*}
this is impossible for all $\eta\in \sigma(\Delta_\M)$ at once.
\end{remark}

\appendix
\section{Proof of Proposition \ref{prop:kbb}}
The proof that $P_\g$ is hypoelliptic with the subelliptic estimate can be found in \cite[Chapter 2.2]{alexis}. There exist vector fields $X_j$ on $S\M$ such that $\Delta_\bbS = -\sum_{j=1}^d X_j^2$ and $\operatorname{div} X_j = 0$ (see \cite[\S 2.2.6]{alexis}). Hence, the $X_j$ as well as $X$ are skew-symmetric with respect to the inner product of $L^2(S\M)$.  It follows that $\Re \langle P_\g f, f\rangle = \sum \frac 12\g^2 \langle X_j f, X_j f\rangle - \g\Re \langle X f , f \rangle\geq 0$, i.e. $P_\g|_{C^\infty}$ is accretive since $\langle X f, f\rangle \in i\R$.

We show that $\operatorname{Ran}(P_\g|_{C^\infty} + I)$ is dense following the proof of \cite[Prop.~5.5]{witten}. Let $f \in \operatorname{Ran}(P_\g|_{C^\infty} + I)^\perp$. Then we have $\langle f, (P_\g + I)u\rangle = 0$ for all $u \in C^\infty$, hence $(P_{-\g} + I)f = 0$ in $\mc D'$. Since $P_{-\g}$ is hypoelliptic, it follows that $f\in C^\infty$ and $0= \sum \frac 12 \g^2\langle X_j f,X_j f\rangle + \langle f,f \rangle - \g\langle Xf ,f\rangle$. Thus $f = 0$. 

We obtain that the closure $\overline {P_\g|_{C^\infty}}$ is maximal-accretive (see e.g. \cite[Thm.~5.4]{witten}).
An operator $A$ on a Hilbert space is maximal-accretive iff it generates a contraction semigroup $e^{-tA}$ (see \cite[p. 241]{reedsimon}) Hence, $\overline {P_\g|_{C^\infty}}$ generates a contraction semigroup $e^{-tP_\g}$. The adjoint semigroup $(e^{-tP_\g})^\ast$ is generated by $(P_\g|_{C^\infty })^\ast$ that is $\frac 12 \g^2 \Delta_\bbS + \g X$ with domain $\{f \in L^2\mid (\frac 12 \g^2 \Delta_\bbS + \g X)f \in L^2\}$ (see \cite[I.5.14 and II.2.5]{engelnagel}). In particular, this operator is maximal-accretive. In analogy we infer that both $\overline{P_\g|_{C^\infty}}$ and $P_\g$ are maximal-accretive and we conclude that they coincide. Similar arguments can be found in \cite{GS14}.

For the positivity of the generated contraction semigroup we have to check if $$\langle (\operatorname{sign}f) P_\g f, u\rangle \geq \langle |f|, (P_\g)^\ast u\rangle$$ for all real $f\in C^\infty$ and a strictly positive subeigenvector $u$ of $(P_\g)^\ast$ (see \cite[C-II Cor. 3.9]{arendt}). Note that $1$ is a strictly positive eigenvector of $(P_\g)^\ast$ and $\frac 12 \Delta_\bbS (x)$ as well as $- X$ generate stochastic Feller processes on $S_x\M$ and $S\M$ respectively (namely the Brownian motion on $S_x\M$ and the  geodesic flow). Hence, $e^{-t\Delta_\bbS(x)}$ and $e^{tX}$ define positive semigroups so that $\langle (\operatorname{sign} f) \Delta_\bbS(x)f, 1\rangle_{S_x\M}\geq 0$ for $f\in C^\infty (S_x\M)$ and $\langle (\operatorname{sign} f) (-X)f, 1\rangle\geq 0$ for $f\in C^\infty(S\M)$ (see \cite[C-II Thm.2.4]{arendt}). Combining both statements completes the proof.


\newcommand{\etalchar}[1]{$^{#1}$}
\providecommand{\bysame}{\leavevmode\hbox to3em{\hrulefill}\thinspace}
\providecommand{\MR}{\relax\ifhmode\unskip\space\fi MR }
\providecommand{\MRhref}[2]{%
  \href{http://www.ams.org/mathscinet-getitem?mr=#1}{#2}
}
\providecommand{\href}[2]{#2}

\end{document}